\newtheorem{theorem}{Theorem}[section]
\newtheorem{corollary}[theorem]{Corollary}
\theoremstyle{definition}
\newtheorem{example}[theorem]{Example}
\newtheorem{remark}[theorem]{Remark}
\numberwithin{equation}{section}
\begin{document}
\setcounter{page}{1}
\title{\vspace{-1.5cm}
\vspace{.5cm}
\vspace{.7cm}
{\large{\bf  Symmetric bi-derivations on certain Banach algebras} }}
 \date{}
\author{{\small \vspace{-2mm}  M. Eisaei$^1$ and Gh. R. Moghimi $^1$\footnote{Corresponding author} }}
\affil{\small{\vspace{-4mm}  $^1$ Department of Mathematics,
Payame Noor University (PNU), Tehran 19395-4697, Iran }}
\affil{\small{\vspace{-4mm}  mojdehessaei59@student.pnu.ac.ir }}
\affil{\small{\vspace{-4mm} moghimimath@pnu.ac.ir}}
\maketitle
\hrule
\begin{abstract}
\noindent
Let $A$ be a Banach algebra with a right identity $u$ such that $uA$ is commutative and semisimple. In this paper, we investigate symmetric bi-derivations of $A$ and detremine their range. We also study symmetric bi-derivations of $A$ with their $k$-centralizing trace. Finally, we prove every symmetric Jordan bi-derivation of $A$ is a symmetric bi-derivation.
 \end{abstract}

 \noindent \textbf{Keywords}: Bi-derivation, $k$-centralizing, $k$-skew centralizing.\\
{\textbf{2020 MSC}}:  47B47, 16W25.
\\
\hrule
\vspace{0.5 cm}
\baselineskip=.8cm
\section{Introduction}

Let $A$ be a Banach algebra.
A mapping $D: A \times A \rightarrow A$ is called a \textit{symmetric bi-linear}  mapping  if $D(a,b)=D(b,a)$,
\begin{equation*}
D( \alpha a, b)= \alpha D(a,b) \quad \text{and} \quad D(a+b,c)=D(a,c)+ D(b,c).
\end{equation*}
for all $a,b,c \in A$ and $\alpha \in \Bbb{C}$. Also,
$D$ is called a \textit{symmetric Jordan bi-derivation} if
$D$ is symmetric bi-linear and
\begin{equation*}
D(a^2,b)= D(a,b) a + a D(a,b)
\end{equation*}
for all $a,b \in A$.
Furthermore, if for every $a,b,c \in A$, we have
\begin{equation*}
D(ab, c)= D(a,c) b + a  D(b,c), 
\end{equation*}
then $D$ is called a \textit{symmetric bi-derivation}.
Clearly, every Jordan bi-derivation is a bi-derivation.
But, the converse is not true, in general.
The mapping $f: A \rightarrow A$ defined by
\begin{equation*}
f(a)=D(a,a)
\end{equation*}
is called the \textit{trace of } $D$.

Let $T: A \rightarrow A$ be a mapping and $k \in \Bbb{N}$. Then $T$ is called 
\textit{$k$-centralizing \emph{(}resp. $k$-skew centralizing\emph{)}}  if for every $a \in A$,
\begin{equation*}
[T(a), a^k]:= T(a) a^k - a^k T(a) \,\,\, (\text{resp. } \langle T(a), a^k \rangle= T(a) a^k + a^k T(a) )
\end{equation*}
is an element of $Z(A)$, the set of all $a\in A$ such that $ax=xa$ for all $x\in A$. In particular, if for every $a \in A$
\begin{equation*}
[T(a), a^k]=0, \,\,\, (\text{resp.} \langle T(a), a^k \rangle =0).
\end{equation*}
then $T$ is called \emph{$k$-commuting} (resp. \emph{$k$-skew commuting}). 

Maksa \cite{m1, m2} introduced and studied symmetric bi-derivations.
Some authors continued this investigation \cite{am, d, p, v1, v2, w}.
For example, vukman \cite{v2} showed that if
$D: R \times R \rightarrow R$ is a symmetric bi-derivation on a non commutative prime ring of characteristic note two and there $R$ such that for every $r \in R$,
\begin{equation*}
[[f(r),r],r] \in Z(R),
\end{equation*}
then $D$ is zero.

In this paper, we always assume that $A$ is a Banach algebra with a right identity $u$ such that $uA$ is commutative and semisimple. We prove that every symmetric bi-derivation of $A$ maps $A$ into $\hbox{ran}(A)$. We also show that if the trace of a symmetric bi-derivation of $A$ is $k$-centralizing, then the symmetric bi-derivation is zero. Finally, we establish that every symmetric Jordan bi-derivation of $A$ is a symmetric bi-derivation.
\section{Main Results}
 Before we give the first our result, let us recall that the right annihilator of $A$ is denoted by $\hbox{ran}(A)$ and it is the set of all $r\in A$ such that $ar=0$ for all $a\in A$. Note that $\hbox{ran}(A)$ is a subset of the radical of $A$.

\begin{theorem} \label{m1}
Let $D: A \times A \rightarrow A$ be a symmetric bi-derivation. Then the range of $D$ is contained into $ \emph{ran}(A)$.
\end{theorem}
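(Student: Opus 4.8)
\emph{Plan of proof.} The strategy is to transfer the problem to the commutative semisimple Banach algebra $uA$ and to absorb everything else into $\mathrm{ran}(A)$ via a single annihilator identity. First I would record the elementary facts forced by $u$ being a right identity: $xu=x$ for all $x\in A$, hence $u^{2}=u$, and, crucially, $y-uy\in\mathrm{ran}(A)$ for every $y\in A$, since $x(y-uy)=xy-(xu)y=0$ for all $x$. In particular $A=uA+\mathrm{ran}(A)$, and an element $z\in A$ already lies in $\mathrm{ran}(A)$ as soon as $uz=0$.

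\emph{The annihilator trick.} For $r\in\mathrm{ran}(A)$ and $c\in A$, and an arbitrary $v\in A$, one has $vr=0$ while $D(v,c)\,r=0$ because $r$ right-annihilates $A$; hence the bi-derivation identity gives $0=D(vr,c)=D(v,c)\,r+v\,D(r,c)=v\,D(r,c)$. As $v$ is arbitrary, $D(r,c)\in\mathrm{ran}(A)$, and by symmetry $D(c,r)\in\mathrm{ran}(A)$ as well. Thus every value of $D$ having at least one argument in $\mathrm{ran}(A)$ lies in $\mathrm{ran}(A)$.

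\emph{The commutative core.} Define $E\colon uA\times uA\to uA$ by $E(v,w)=uD(v,w)$. Using $uv=v=vu$ for $v\in uA$, a direct check shows $E$ is again a symmetric bi-derivation, this time of $uA$ itself; note $uA=\{x\in A: ux=x\}$ is closed (it is the kernel of the bounded map $x\mapsto x-ux$), so $uA$ really is a commutative semisimple Banach algebra. Fixing $w$, the map $v\mapsto E(v,w)$ is a derivation of $uA$; by the Johnson--Sinclair theorem it is automatically continuous, and then by the Singer--Wermer theorem its range lies in $\mathrm{rad}(uA)=\{0\}$. Hence $E\equiv0$, i.e. $uD(v,w)=0$ for all $v,w\in uA$. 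I expect this combined appeal to automatic continuity and the Singer--Wermer theorem to be the only non-elementary ingredient, and hence the main obstacle; everything else is purely algebraic.

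\emph{Assembling.} Given $a,b\in A$, write $a=ua+r$ and $b=ub+s$ with $r=a-ua$ and $s=b-ub$ in $\mathrm{ran}(A)$. Expanding by bilinearity, $D(a,b)=D(ua,ub)+D(ua,s)+D(r,ub)+D(r,s)$; the last three summands lie in $\mathrm{ran}(A)$ by the annihilator trick. For the first, $uD(ua,ub)=E(ua,ub)=0$, so $D(ua,ub)=D(ua,ub)-uD(ua,ub)\in\mathrm{ran}(A)$ by the basic fact above. Therefore $D(a,b)\in\mathrm{ran}(A)$, which is the claim.
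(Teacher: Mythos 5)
Your proof is correct. It differs from the paper's in that the paper disposes of the theorem in two lines: it fixes $m$, observes that $\phi_m(n)=D(m,n)$ defines a derivation on $A$ (the derivation law in the second variable is exactly the statement that $\phi_m$ is a derivation), and then cites an earlier result of Mehdipour--Moghimi--Salkhordeh asserting that every derivation of such an $A$ maps into $\mathrm{ran}(A)$. You instead prove everything from scratch: the decomposition $A=uA+\mathrm{ran}(A)$ via $y-uy\in\mathrm{ran}(A)$, the annihilator identity $0=D(vr,c)=vD(r,c)$ showing $D(\mathrm{ran}(A),A)\subseteq\mathrm{ran}(A)$, the induced symmetric bi-derivation $E(v,w)=uD(v,w)$ on the closed commutative semisimple subalgebra $uA$, and the vanishing of $E$ via Johnson--Sinclair plus Singer--Wermer. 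This is essentially a reconstruction of the proof of the cited single-variable lemma, adapted directly to the bilinear setting, and all the steps check out (in particular the bi-derivation law for $E$ does hold, since the error term $v\bigl(D(v',w)-uD(v',w)\bigr)$ vanishes by your basic fact). What the paper's route buys is brevity and modularity --- once derivations on $A$ are known to land in $\mathrm{ran}(A)$, the bilinear statement is immediate by freezing one variable, and the same one-line argument recycles for the Remark following the theorem. What your route buys is a self-contained proof that makes visible exactly which structural ingredients (the idempotent decomposition, the annihilator absorption, and the classical theorems on derivations of commutative semisimple Banach algebras) are actually doing the work. You could in fact shorten your own argument by noticing, as the paper does, that it suffices to treat the one-variable derivation $\phi_m$; your annihilator trick and commutative-core steps are then precisely the standard proof for that case.
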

\begin{proof}
First, note that  the range of $D$ is equal to
$\cup_{m \in A} \phi_m(A)$,
where $$\phi_m(n)= D(m,n)$$ for all
$n \in A$.
But $\phi_m$ is a derivation on $A$ and so
$\phi_m(A) \subseteq  \hbox{ran}(A)$; see \cite{mms}.
This proves the theorem.
\end{proof}
\begin{remark}
Let $\mathcal{A}$ be a Banach algebra such that $\frac{\mathcal{A}}{\hbox{ran}(A)}$ is commutative and semisimple.
Then a similar argument to the proof of Theorem 2.1 shows that the range of any symmetric bi-derivation $D: \mathcal{A} \times \mathcal{A} \rightarrow \mathcal{A}$ is contained into
$ \hbox{ran}(\mathcal{A})$.
\end{remark}
\begin{theorem} \label{m2}
Let $D: A \times A \rightarrow A$ be a symmetric bi-derivation and $f$ be the trace of $D$.
Then the following assertions are equivalent.

\emph{(a)} There exists $k \in \Bbb{N}$ such that $f$ is $k$-commuting.

\emph{(b)} There exists $k \in \Bbb{N}$ such that $f$ is $k$-centralizing.

\emph{(c)} There exists $k \in \Bbb{N}$ such that $f$ is $k$-skew commuting.

\emph{(d)} There exists $k \in \Bbb{N}$ such that $f$ is $k$-skew centralizing.

\emph{(e)} $D=0$.
\end{theorem}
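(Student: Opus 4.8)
The plan is to establish the equivalences as a cycle: first show that each of (a)--(d) forces the single identity $f(a)\,a^{k}=0$ for all $a\in A$, then that this identity forces $D=0$, and finally note that $D=0$ trivially implies all of (a)--(d). Since $A$ is a complex algebra, $D$ and its trace determine each other via $D(a,b)=\frac{1}{2}\bigl(f(a+b)-f(a)-f(b)\bigr)$, so everything reduces to proving $f\equiv 0$.

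\emph{The reduction of (a)--(d).} By Theorem~\ref{m1}, $f(a)=D(a,a)\in\operatorname{ran}(A)$ for every $a$, so $a^{k}f(a)=0$; thus $[f(a),a^{k}]=\langle f(a),a^{k}\rangle=f(a)a^{k}$, which again lies in $\operatorname{ran}(A)$ since $\operatorname{ran}(A)$ is a right ideal. Because the right identity $u$ forces $\operatorname{ran}(A)\cap Z(A)=\{0\}$ — a central $z\in\operatorname{ran}(A)$ satisfies $z=zu=uz=0$ — each of (a)--(d) amounts to the assertion that $f(a)a^{k}\in Z(A)$, hence that $f(a)a^{k}=0$, for all $a\in A$. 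Conversely, $D=0$ trivially gives all of (a)--(d). So the entire theorem hinges on the implication $f(a)a^{k}=0\ (\forall a)\ \Longrightarrow\ f\equiv 0$.

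\emph{Normal form.} I would next record the structure forced by the right identity: $A=uA\oplus\operatorname{ran}(A)$ as a direct sum of subspaces, where $uA$ is a commutative semisimple subalgebra with identity $u$, $\operatorname{ran}(A)^{2}=0$, and $rc=r(uc)$ for $r\in\operatorname{ran}(A)$, $c\in A$ (because $c-uc\in\operatorname{ran}(A)$). Combining the last fact with $f(a)\in\operatorname{ran}(A)$ gives $f(a)a^{k}=f(a)(ua)^{k}$, so the hypothesis reads $f(a)(ua)^{k}=0$ with $ua\in uA$.

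\emph{Descent in the exponent.} Replacing $a$ by $a+\lambda b$ in $f(a)(ua)^{k}=0$ and expanding $\bigl((ua)+\lambda(ub)\bigr)^{k}$ by the binomial theorem — legitimate precisely because $uA$ is commutative — the coefficient of $\lambda$ gives $k\,f(a)(ua)^{k-1}(ub)+2D(a,b)(ua)^{k}=0$ for all $a,b$. Putting $a=u$, and using $f(u)=0$ (immediate from the hypothesis at $a=u$), yields $D(u,b)=0$ for all $b$, hence $D(b,u)=0$ by symmetry. Then the same polarization applied to $f(a)(ua)^{j}=0$, followed by the substitution $b=u$, annihilates the $D(a,u)$-term and leaves $f(a)(ua)^{j-1}=0$. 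Descending from $j=k$ down to $j=1$ produces $f(a)(ua)=0$ and finally $f(a)=f(a)u=0$ for all $a$, whence $D\equiv 0$; this is (e).

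\emph{Where the difficulty lies.} The reduction and the structural facts are routine bookkeeping. The heart of the argument is the exponent descent: since $A$ is far from prime — it carries a large annihilator ideal — and its elements are generally non-invertible, one cannot simply cancel $a^{k}$. What makes the descent possible is the splitting $A=uA\oplus\operatorname{ran}(A)$ together with $\operatorname{ran}(A)^{2}=0$, and, crucially, the commutativity of $uA$: it both legitimizes the binomial expansion and isolates exactly the term that lets one strip off a single factor of $ua$ at each stage. The vanishing of $D(\cdot,u)$, obtained en route, is precisely what removes the error term in the inductive step.
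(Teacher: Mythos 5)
Your proof is correct, and in the crucial ``descent'' step it takes a genuinely different route from the paper's. Both arguments share the same skeleton: use Theorem~\ref{m1} to get $f(a)\in\operatorname{ran}(A)$, hence $a^{k}f(a)=0$ and $[f(a),a^{k}]=\langle f(a),a^{k}\rangle=f(a)a^{k}$, reduce everything to the implication $f(a)a^{k}=0\Rightarrow f\equiv0$, and recover $D$ from $f$ by $2D(a,b)=f(a+b)-f(a)-f(b)$. (Your explicit observation that $Z(A)\cap\operatorname{ran}(A)=\{0\}$, via $z=zu=uz=0$, is actually a point the paper glosses over when it passes from ``centralizing'' to ``commuting''; including it is a genuine improvement.) Where you diverge is in how the exponent is stripped down. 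The paper substitutes $m+u$ for $m$, expands $(m+u)^{k}$ noncommutatively using $xu=x$, splits the resulting identity into even and odd parts by replacing $m$ with $-m$, proves $D(m,u)=0$ by evaluating on $m-um\in\operatorname{ran}(A)$, and then multiplies the surviving identity by powers of $m$ on the right to eliminate terms one at a time, treating $k$ even in detail and dismissing $k$ odd with ``similarly.'' You instead first push the identity into the commutative subalgebra $uA$ via $f(a)a^{k}=f(a)(ua)^{k}$, then polarize $a\mapsto a+\lambda b$ and extract the coefficient of $\lambda$, obtaining $k\,f(a)(ua)^{k-1}(ub)+2D(a,b)(ua)^{k}=0$; setting $a=u$ gives $D(u,\cdot)=0$, and setting $b=u$ thereafter lowers the exponent by exactly one per step. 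This buys a uniform treatment of even and odd $k$, avoids the parity bookkeeping and the somewhat delicate noncommutative binomial expansion, and isolates cleanly why commutativity of $uA$ matters. The paper's version, on the other hand, stays closer to the raw hypothesis and does not need the $\lambda$-polarization device. Both are valid; yours is the tidier argument.
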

\begin{proof}
Let $m \in A$ and $k \in \Bbb{N}$.
It follows from Theorem 2.1 that
\begin{eqnarray*}
[D(m,m), m^k]&=& D(m,m) m^k - m^k D(m,m) \\
&=& D(m,m) m^k + m^k D(m,m) \\
&=& \langle D(m,m), m^k \rangle.
\end{eqnarray*}
Thus
\begin{equation*}
[f(m), m^k] = \langle f(m), m^k \rangle.
\end{equation*}
So the statements (a)-(d) are equivalent.
Assume now that $f$ is $k$-commuting.
Then
\begin{eqnarray} \label{po1}
f(m) m^k &=& D(m,m) m^k  \nonumber \\
&=& D(m,m) m^k - m^k D(m,m) \nonumber  \\
&=& [D(m,m), m^k]\\
&=& [f(m), m^k]\nonumber\\
&=&0\nonumber
\end{eqnarray}
for all $m \in A$.
Hence $f(u)=0$. We also have
\begin{eqnarray*}
f(m+u)&=& D(m+u, m+u) \nonumber \\
&=& D(m,m) +2 D(m,u)+ D(u,u) \nonumber  \\
&=& f(m) + 2 D(m,u)
\end{eqnarray*}
for all $m \in A$. This together with (2.1) shows that
\begin{eqnarray*}
0&=& f(m+u) (m+u)^k \\
&=& (f(m)+ 2 D(m,u)) (m+u)^k \\
&=& (f(m)+ 2 D(m,u)) \bigg ( \sum_{j=0}^{k-1} \binom{k-1}{j}m^{k-j} + \sum_{j=0}^{n-1} \binom{n-1}{j} u m^{k-j-1} \bigg)
\end{eqnarray*}
for all $m \in A$. This implies that
\begin{eqnarray} \label{po3}
 \sum_{j=1}^{k} \binom{k}{j} f(m) m^{k-j} +2 \sum_{j=0}^{k} \binom{k}{j} D(m,u) m^{k-j}  =0.
\end{eqnarray}
Put
\begin{eqnarray*}
E_1(m)=  \sum_{ \tiny{\begin{array}{c}
j \, even  \\
 j=2
\end{array}}}^{k} \binom{k}{j} f(m) m^{k-j}, \quad
E_2(m)= 2  \sum_{ \tiny{\begin{array}{c}
j \, even  \\
 j=0
\end{array}}}^{k} \binom{k}{j} D(m,u) m^{k-j},
\end{eqnarray*}
\begin{eqnarray*}
O_1(m)=  \sum_{ \tiny{\begin{array}{c}
j \, odd  \\
 j=1
\end{array}}}^{k} \binom{k}{j} f(m) m^{k-j}, \quad
O_2(m)= 2  \sum_{ \tiny{\begin{array}{c}
j \, odd  \\
 j=1
\end{array}}} \binom{k}{j} D(m,u) m^{k-j}.
\end{eqnarray*}
It follows from (2.2) that
\begin{equation} \label{op4}
E_1(m) + E_2(m) + O_1(m)+ O_2(m)=0
\end{equation}
for all $m \in A$.
If we replace $m$ by $-m$ in (2.3), then
\begin{equation*}
E_1(m) - E_2(m) - O_1(m)+ O_2(m)=0
\end{equation*}
Hence
\begin{equation} \label{op5}
E_1(m)+O_2(m)=0
\end{equation}
and so
\begin{equation} \label{op6}
O_1(m)+ E_2(m)=0.
\end{equation}
Now, let $k$ be even. Then by (2.5), we have
\begin{eqnarray*}
0 &=& O_1(m) + E_2(m) \\
&=&   \sum_{ \tiny{\begin{array}{c}
j \, odd  \\
 j=1
\end{array}}}^{k-1} \binom{k}{j} f(m) m^{k-j} +
2 \sum_{ \tiny{\begin{array}{c}
j \, even   \\
 j=0
\end{array}}}^{k-2}
\binom{k}{j}  D(m,u) m^{k-j} + 2 D(m,u).
\end{eqnarray*}
Thus for every $r \in \hbox{ran} (A)$, we obtain
\begin{equation} \label{op7}
D(r,u)=0,
\end{equation}
because $$f(r) r^{k-j} = D(r,u) r^{k-j}=0.$$
Since $m-um \in \hbox{ran}(A)$, it follows that
\begin{eqnarray*}
0&=& D(m-um,u) \\
&=& D(m,u) - D(u,u) m- uD(m,u) \\
&=& D(m,u) - f(u) m \\
&=& D(m,u)
\end{eqnarray*}
for all $m \in A$.
Hence $O_2=0$ and thus
\begin{equation} \label{op8}
\sum_{ \tiny{\begin{array}{c}
j \, even   \\
 j=2
\end{array}}}^{k}
\binom{k}{j}  f(m) m^{k-j}=0.
\end{equation}
If $i$ is even and $2 \leq i \leq k-2$, then by
(2.7), we have
\begin{eqnarray*}
0&=&
\sum_{ \tiny{\begin{array}{c}
j \, even   \\
 j=2
\end{array}}}^{k}
\binom{k}{j}  f(m) m^{k+i-j} \\
&=& \sum_{ \tiny{\begin{array}{c}
j \, even   \\
 j=2
\end{array}}}^{k}
\binom{k}{j}  f(m) m^{k} m^{i-j} +
 \sum_{ \tiny{\begin{array}{c}
j \, even   \\
 j=i+2
\end{array}}}^{k}
\binom{k}{j}  f(m) m^{k+i-j}\\
&=& \sum_{ \tiny{\begin{array}{c}
j \, even   \\
 j=i+2
\end{array}}}^{k}
\binom{k}{j}  f(m) m^{k+i-j}.
\end{eqnarray*}
For $i=k-2$, we get
$f(m) m^{k-2} =0$.
Hence
\begin{equation*}
\sum_{ \tiny{\begin{array}{c}
j \, even   \\
 j=4
\end{array}}}^{k}
\binom{k}{j}  f(m) m^{k-j}=0\quad\hbox{ and} \quad
\sum_{ \tiny{\begin{array}{c}
j \, even   \\
 j=i+2
\end{array}}}^{k}
\binom{k}{j}  f(m) m^{k+i-j}=0.
\end{equation*}
Continuing this procedure, we have $f=0$ and therefore $D=0$.
Similarly, if  $k$ is odd, one can prove that $D=0$.
\end{proof}
\begin{theorem} \label{m4}
Let $D: A \times A \rightarrow A$ be a symmetric Jordan bi-derivation.
Then $D$ is a symmetric bi-derivation.
\end{theorem}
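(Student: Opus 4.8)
The plan is to reduce the two-variable identity to a one-variable one. For a fixed $c\in A$ put $\psi_c(a)=D(a,c)$; by bi-linearity $\psi_c$ is linear, and by the defining identity of a symmetric Jordan bi-derivation $\psi_c(a^2)=D(a^2,c)=D(a,c)a+aD(a,c)=\psi_c(a)a+a\psi_c(a)$, so $\psi_c$ is a Jordan derivation of $A$. Thus the theorem follows at once from the single fact that \emph{every Jordan derivation $\psi\colon A\to A$ is a derivation}: granting this, $D(ab,c)=\psi_c(ab)=\psi_c(a)b+a\psi_c(b)=D(a,c)b+aD(b,c)$ for all $a,b,c\in A$, which together with the symmetry $D(a,b)=D(b,a)$ is exactly the assertion. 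If \cite{mms}, in the spirit of the result it supplies for Theorem~\ref{m1}, already records that Jordan derivations of $A$ are derivations, one only has to quote it; otherwise I would proceed as follows.

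The structural facts I would use come from the right identity: since $a-ua\in\hbox{ran}(A)$ for every $a$, we have $A=uA\oplus R$ with $R:=\hbox{ran}(A)$, where $uA$ is a commutative semisimple Banach algebra with identity $u$, $R$ is an ideal with $AR=0$ (hence $R^2=0$), and $(a_0+r)(b_0+s)=a_0b_0+rb_0$ for $a_0,b_0\in uA$ and $r,s\in R$; note also that $ux=0$ forces $x=x-ux\in R$. Now let $\psi$ be a Jordan derivation. From $\psi(u)=\psi(u^2)=\psi(u)u+u\psi(u)=\psi(u)+u\psi(u)$ we get $u\psi(u)=0$, hence $\psi(u)\in R$. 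Next, for $r\in R$ expand $\psi(r\circ u)=\psi(r)\circ u+r\circ\psi(u)$ (with $x\circ y=xy+yx$): since $u$ is a right identity and $R$ is left-annihilated, the terms $r\psi(u)$ and $\psi(u)r$ drop out and one is left with $u\psi(r)=0$, so $\psi(R)\subseteq R$. Hence $\psi$ induces a Jordan derivation on $A/R\cong uA$, a commutative semisimple Banach algebra, which must be zero; therefore $\psi(A)\subseteq R$.

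Once $\psi(A)\subseteq R$, the remaining steps are bookkeeping against the decomposition, with $AR=0$ and $R^2=0$ discarding every wrong-sided product. First, $\psi|_{uA}$ is a derivation: expanding $\psi(u\circ y)$ and using $u\circ y=2y$ gives $\psi(y)=\psi(u)y$ for $y\in uA$, whence $\psi(yz)=\psi(u)(yz)=(\psi(u)y)z=\psi(y)z=\psi(y)z+y\psi(z)$. Second, for $r\in R$ and $a_0\in uA$, expanding $\psi(r\circ a_0)$ and discarding the annihilated terms gives $\psi(ra_0)=\psi(r)a_0$. Finally, for $a=a_0+r$ and $b=b_0+s$ the multiplication rule gives $ab=a_0b_0+rb_0$, so $\psi(ab)=\psi(a_0b_0)+\psi(rb_0)=\psi(a_0)b_0+\psi(r)b_0=\psi(a)b_0=\psi(a)b+a\psi(b)$, and $\psi$ is a derivation.

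The main obstacle is the implication $\psi(\hbox{ran}(A))\subseteq\hbox{ran}(A)\ \Rightarrow\ \psi(A)\subseteq\hbox{ran}(A)$: this is the one point where the hypothesis that $uA$ is commutative and semisimple is genuinely used, through the classical fact that a commutative semisimple Banach algebra carries no nonzero Jordan derivation, and it is exactly what one would otherwise import from \cite{mms}. Everything afterward is elementary, but two points need care: the ground field is $\Bbb{C}$, so the factors of $2$ produced by linearizing $a\mapsto a^2$ are invertible; and each of the products $a\psi(b)$, $\psi(a)r$, $y\psi(u)$, $u\psi(y)$ and the like that one wishes to discard really lies in $AR=0$ or in $R^2=0$.
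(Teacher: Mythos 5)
Your reduction is exactly the paper's: fix one variable, observe that $\psi_c=D(\cdot,c)$ is a Jordan derivation of $A$, and invoke the fact that every Jordan derivation of $A$ is a derivation --- which the paper simply quotes as Theorem 2.2 of \cite{mms}. Your self-contained proof of that quoted fact (showing $\psi(u)\in\hbox{ran}(A)$, then $\psi(\hbox{ran}(A))\subseteq\hbox{ran}(A)$, then that the induced Jordan derivation on $A/\hbox{ran}(A)\cong uA$ vanishes because $uA$ is commutative and semisimple, and finally the bookkeeping on $A=uA\oplus\hbox{ran}(A)$ with $A\,\hbox{ran}(A)=0$) is correct, so your argument is complete even without the citation.
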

\begin{proof}
Let $m \in A$ and $\phi_m$ be the function defined as the proof of Theorem 2.1. Then for every $n \in A$, we have
\begin{eqnarray*}
\phi_m(n^2) &=& D(m,n^2) \\
&=& D(m,n) n + n D(m,n)\\
& =& \phi_m(n) n + n \phi_m(n).
\end{eqnarray*}
Thus $\phi_m$ is a Jordan derivation on $A$ for all $m \in A$.
Hence, $\phi_m$ is a derivation on $A$, see Theorem 2.2 of \cite{mms}. Therefore,
\begin{eqnarray*}
D(m k, n) &=& \phi_n(mk) \\
&=& \phi_n(m) k + m \phi_n (k) \\
&=& D(m,n)k + m D(k,n)
\end{eqnarray*}
for all $m,n,k \in A$.
That is, $D$ is a symmetric bi-derivation.
\end{proof}

As an immediate consequence of Theorems 2.1 and 2.4 we have the following result.

\begin{corollary} Let ${\cal A}$ be a unital commutative semisimple Banach algebra. Then the zero map is the only symmetric bi-derivation on ${\cal A}$.
\end{corollary}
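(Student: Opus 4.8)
The plan is to recognize that a unital commutative semisimple Banach algebra falls squarely under the standing hypotheses of the paper, so that Theorem~\ref{m1} applies verbatim. Concretely, if $\mathcal{A}$ is unital with identity $1$, then $1$ is in particular a right identity $u$ for $\mathcal{A}$, and $u\mathcal{A}=\mathcal{A}$, which is commutative and semisimple by hypothesis. Hence $\mathcal{A}$ is exactly of the type to which Theorems~\ref{m1} and~\ref{m4} apply, and no extra work is needed to set up the machinery.

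Next I would invoke Theorem~\ref{m1}: for any symmetric bi-derivation $D$ on $\mathcal{A}$, the range of $D$ is contained in $\mathrm{ran}(\mathcal{A})$. The final step is the elementary observation that $\mathrm{ran}(\mathcal{A})=\{0\}$ whenever $\mathcal{A}$ has an identity: if $ar=0$ for all $a\in\mathcal{A}$, then taking $a=1$ forces $r=0$. Consequently $D(\mathcal{A}\times\mathcal{A})\subseteq\{0\}$, that is, $D=0$; and since the zero map is trivially a symmetric bi-derivation, it is the only one.

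There is essentially no obstacle here — the whole statement is a one-line specialization of Theorem~\ref{m1} combined with the fact that a unital algebra has trivial right annihilator — so the only point worth writing out explicitly is the verification that the hypotheses of Theorem~\ref{m1} are satisfied by $\mathcal{A}$. I would also add a remark that, by further combining with Theorem~\ref{m4}, the same conclusion extends to symmetric Jordan bi-derivations: each such map on $\mathcal{A}$ is a symmetric bi-derivation, hence zero.
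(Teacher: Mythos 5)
Your proposal is correct and is essentially the argument the paper intends: the corollary is stated as an immediate consequence of Theorem~\ref{m1}, and you supply exactly the two needed observations (the identity serves as the right identity $u$ with $u\mathcal{A}=\mathcal{A}$ commutative and semisimple, and $\mathrm{ran}(\mathcal{A})=\{0\}$ in a unital algebra). Your closing remark also correctly explains why the paper additionally cites Theorem~\ref{m4}, namely to cover symmetric Jordan bi-derivations.
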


We finsh the paper with the following example.

\begin{example} Let $\omega$ be a weight function on $[0, \infty)$. 

(i) Let $M(\omega)$ be the Banach algebra of all complex regular Borel measures $ \mu $ on $[0, \infty)$ such that $ \omega \mu  \in M( [0, \infty)) $, the Banach algebra of all complex regular Borel measures on $ [0, \infty)$. It is well-known that $M(\omega)$ is a unital commutative semisimple Banach algebra; see \cite{dl, r0}. Hence  the only symmetric bi-derivation on $M(\omega)$ is zero.

(ii) Let $L_0^\infty(\omega)$ be the Banach space of all Lebesgue measure functions $f$ on $[0, \infty)$ such that
\begin{equation*}
\lim_{x \rightarrow \infty} \text{ess sup}
 \bigg \{ \frac{f(y) \chi_{(x, \infty)} (y)}{w(y)}: y \geq 0 \bigg\}=0,
\end{equation*}
where $\chi_{(x, \infty)}$ is the characteristic function of $(x, \infty)$ on $[0, \infty)$. Then $L_0^\infty(\omega)^*$ is a Banach algebra \cite{mnr}. Also, $L_0^\infty(\omega)^*$ has a right identity, say $u$, and $uL_0^\infty(\omega)^*$ is isometrically isomorphic to $M(\omega)$; for an extensive study of this Banach algebra see \cite{am1, lp, mnr, mm, mn}. One can prove that the radical of $L_0^\infty(\omega)^*$ is equal to $\hbox{ran}(L_0^\infty(\omega)^*)$; see \cite{ms}. Therefore, every symmetric Jordan bi-derivation of $L_0^\infty(\omega)^*$ is a symmetric bi-derivation and its range is into the radical of $L_0^\infty(\omega)^*$. Also, there is no non-zero symmetric bi-derivation with $k$-cerntralizing trace.
\end{example}

\end{document}